\theoremstyle{plain}
\newtheorem{theorem}{Theorem}[section]
\newtheorem{lemma}[theorem]{Lemma}
\newtheorem{proposition}[theorem]{Proposition}
\theoremstyle{definition}
\theoremstyle{remark}
\def\Zbl{\textrm{Zbl\,}}
\def\MR{\textrm{MR\,}}
\begin{document}
\begin{center}
\textbf{Homology group of branched cyclic covering over a $2$-bridge knot of genus two}
\vspace{1cm}

\textbf{Ilya Mednykh}

\end{center}
\begin{abstract}
The structure of the first homology group of a cyclic covering of a knot is an important invariant well known in the knot theory. In the last century, H. Seifert developed a general approach to compute the homology group of the covering. Based  on his ideas R. Fox found explicit form for $H_{1}(M_{n},\mathbb{Z}),$ where $M_{n}$ is an $n$-fold cyclic covering over a knot $K$ admitting genus one Seifert surface. 

The aim of the present paper is to find the structure of $H_{1}(M_{n},\mathbb{Z})$ for $2$-bridge knots admitting genus two Seifert surface. The result is given explicitly in terms of Alexander polynomial of the knot.
\end{abstract}

\noindent\textbf{Primary:} 57M12; \textbf{Secondary:} 57K14, 39A06.

\noindent\textbf{Keywords:}
Alexander polynomial, knot, branched covering, homology group.

\section{Introduction}
Let $K$ be a knot in the three dimensional sphere $\mathbb{S}^{3}.$ Denote by $M_{n}$ a cyclic $n$-fold covering of $\mathbb{S}^{3}$ branched over $K.$ The general description of the first homology group $H_{1}(M_{n},\mathbb{Z})$ was done by Seifert \cite{Seifert}. To do this he introduced matrix $\Gamma$ which can be expressed via Seifert matrix $V$ of the knot $K$ as $\Gamma=(V-V^{\prime})^{-1}V$ where $V^{\prime}$ is a transpose of $V.$ Then $H_{1}(M_{n},\mathbb{Z})$ as an Abelian group is represented by the matrix $\Gamma^{n}-(\Gamma-I)^{n}.$

Recall that the Alexander polynomial of a knot $K$ is given by equality $A(z)=\det(V-z V^{\prime}).$ We note that even in the class of $2$-bridge knots Alexander polynomial does not determine the congruence class of matrix $V.$ For example, two $2$-bridge knots with rational slopes $17/2$ and $15/4$ share the same Alexander polynomial $A(z)=4-7z+4z^2,$ but have non-congruent Seifert matrices $\left(\begin{array}{cc} 4& 1 \\ 0 & 1\end{array}\right)$ and $\left(\begin{array}{cc} 2& 1 \\ 0 & 2\end{array}\right).$ In spite of this, these two knots have the same first homology group of a cyclic $n$-fold covering. See the Theorem~\ref{caseaiszero} below.

In general, the Alexander polynomial does not designate the structure of the first homology group $H_{1}(M_{n},\mathbb{Z}).$ Indeed, the stevedore's knot $6_{1}$ and the knot $9_{46}$ have the same Alexander polynomial, but in the first case, $H_1(M_{2},\mathbb{Z})\cong\mathbb{Z}_9,$ where as in the second, $H_1(M_{2},\mathbb{Z})\cong\mathbb{Z}_{3}\oplus\mathbb{Z}_{3}.$

However, if we restrict ourselves to the class of $2$-bridge knots, the situation became much more plausible. It was shown by A. Cattabriga and M. Mulazzani \cite{CattMull} that the polynomial associated with $n$-fold cyclic covering $M_{n}$ of a $2$-bridge knot coincides with the Alexander polynomial of this knot. That is, $H_{1}(M_{n},\mathbb{Z})$ is completely determined through Alexander polynomial of $2$-bridge knot. 

In this paper, we present explicit formulas for the group $H_{1}(M_{n},\mathbb{Z})$ for $2$-bridge knots of genus one and two by making use of coefficients of their Alexander polynomials.

\section{Cyclic $n$-fold covering of 2-bridge knots}

For the basic definitions and facts from the knot theory we refer the reader to the D. Rolfsen book \cite{Rolf} and A. Kawauchi survey \cite{Kawa}.

Recall the following basic notions and definitions. A $2$-bridge knot (or rational) is a knot admitting a diagram with two bridges. A full classification of $2$-bridge knots was done by H. Schubert \cite{Schu}. These knots are uniquely defined by their rational slope $p/q,$ where $p$ and $q$ are positive integers and $1\le q \le p-1.$ Two knots with rational slopes $p/q,$ and $p^\prime/q^\prime,$ are equivalent if and only if $p=p^\prime$ and $q\equiv-q^\prime$ or $q\,q^\prime\equiv 1\mod p.$ By M. Boileau and B. Zimmermann \cite{BoilZimm}, a knot is $2$-bridge if and only if the fundamental group of it complement in the three dimensional sphere is generated by two meridians.
  
The general approach to calculate Conway and Alexander polynomials of a $2$-bridge knot is given in \cite{KoselPeck}. 
 
A Seifert surface of a knot $K$ is an orientable surface with one boundary component which coincides with the knot $K.$

A genus of the knot $K$ is a minimum genus of any Seifert surface for $K.$
  
The degree Alexander polynomial of genus $g$ knot never exceed $2g$ \cite{Seifert}.

Let $K$ be a $2$-bridge knot and $M_{n}$ is an $n$-fold cyclic covering of the three dimensional sphere branched over $K.$ We note that the fundamental group $\pi_1(\mathbb{S}^3\setminus K)$  is generated by two meridians $\texttt{x}_0,\,\texttt{x}_1$ of knot $K$  and has  exactly one relation  $r_1(\texttt{x}_0,\,\texttt{x}_1)=0$  whose structure is completely determined by the slope of the knot \cite{Schu}. Denote by $A(z)=\sum_{\ell=0}^{s}a_{\ell}z^{\ell}$ the Alexander polynomial of  knot $K$ normalizied by conditions $a_{0}\neq0,\,a_{s}\neq0$ and $A(1)=1.$ According to (\cite{Fox3}, p. 416) the first homology group of $M_{n}$ is an Abelian group represented by matrix $A(T_{n}),$ where $T_{n}=circ(0,1,\ldots,0)$ is an $n\times n$ circulant matrix. Thereby, $\mathcal{H}=H_{1}(M_{n},\mathbb{Z})$ has the following representation $$\mathcal{H}=\big\langle x_{1},\,x_{2},\ldots,x_{n}|A(T_{n})(x_{1},x_{2},\ldots,x_{n})^{t}=0\big\rangle.$$ This leads to equivalent infinite representations 
$$\mathcal{H}=\big\langle x_{j},\,j\in\mathbb{Z}|\sum_{\ell=0}^{s}a_{\ell}x_{i+\ell}=0,\,x_{i+n}=x_{i},\,i\in\mathbb{Z}\big\rangle$$ 
We set $$U=\left(\begin{array}{cccc}0&-1& \ldots &0 \\ 0&0& &0\\ \vdots & & \ddots& \vdots \\ 0&0& &-1 \\ a_{0}&a_{1}& \ldots & a_{s-1}\end{array}\right) \text {  and   }  V=\left(\begin{array}{ccccc}1&0& \ldots &0&0 \\ 0&1& &0&0\\ \vdots & & \ddots& & \vdots \\ 0&0& &1&0 \\ 0&0& \ldots & 0&a_{s}\end{array}\right).$$

Then  we can rewrite the representation of group $\mathcal{H}$ as 

$$\mathcal{H}=\big\langle x_{j},\,j\in\mathbb{Z}|\,
U\,X_{i}+V\,X_{i+1}=0,X_{i+n}=X_{i},\,i\in\mathbb{Z}\big\rangle,$$ where $X_{i}=(x_{i},x_{i+1},\ldots,x_{i+s-1})^{t}$ is a column vector of length $s.$ 
We note that $U+V=\left(\begin{array}{ccccc}1&-1& \ldots &0&0 \\ 0&1& &0&0\\ \vdots & & \ddots& & \vdots \\ 0&0& &1& -1 \\ a_{0}& a_{1} & \ldots & a_{s-2}& a_{s-1}+a_{s}\end{array}\right).$ By induction we have $$\det(U+V)=a_{0}+a_{1}+\ldots+a_{s-1}+a_{s}=A(1)=1.$$ Therefore both $U+V$ and $(U+V)^{-1}$ are integer unimodular matrices. Next,  we multiply each of the relations $U\,X_{i}+V\,X_{i+1}=0,\,i\in\mathbb{Z}$ from left-hand side by $(U+V)^{-1}$ to obtain $$\mathcal{H}=\big\langle x_{j},\,j\in\mathbb{Z}|
\Gamma\,X_{i}-(\Gamma-I)\,X_{i+1}=0,X_{i+n}=X_{i},\,i\in\mathbb{Z}\big\rangle,$$ where $\Gamma=(U+V)^{-1}U.$ 

We finish the section with the following lemma attributed to H. Seifert (\cite{Seifert}, p. 577, Satz 1).

\begin{lemma}\label{Seifertlemma} Let $K$ be a two-bridge knot with the Alexander polynomial $A(z).$ Denote by $\mathcal{H}=H_{1}(M_{n},\mathbb{Z})$ the first homology group of the cyclic $n$-fold covering $M_{n}$ of the three-dimensional sphere $\mathbb{S}^3$ branched over $K.$ Then the Abelian group $\mathcal{H}$ has the following matrix of relations $\Gamma^{n}-(\Gamma-I)^{n}.$  
\end{lemma}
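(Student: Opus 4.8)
The plan is to realise $\mathcal{H}$ as the cokernel of a single $ns\times ns$ integer matrix and then to reduce that matrix, by unimodular block operations, to the block-diagonal form $\operatorname{diag}(I_s,\dots,I_s,\,\Gamma^{n}-(\Gamma-I)^{n})$. First I would promote the $n$ vectors $X_1,\dots,X_n$ to honest free generators. In the presentation with relations $U\,X_i+V\,X_{i+1}=0$, the first $s-1$ coordinates of each relation identify the top $s-1$ entries of $X_{i+1}$ with the bottom $s-1$ entries of $X_i$; that is, they are precisely the overlap identifications that reconstruct the windows, while only the last coordinate carries the Alexander relation. Hence, taking the $ns$ coordinates of $X_1,\dots,X_n$ as independent generators and imposing $U\,X_i+V\,X_{i+1}=0$ for $i=1,\dots,n$ (indices read modulo $n$) still presents $\mathcal{H}$. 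Since $(U+V)^{-1}$ is unimodular, left-multiplying each relation vector by it leaves the relation submodule unchanged, and therefore $\mathcal{H}=\operatorname{coker}\mathcal{C}$, where $\mathcal{C}$ is the block-circulant matrix whose first block-row is $(\Gamma,\,-(\Gamma-I),0,\dots,0)$.

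Write $P=\Gamma$ and $Q=\Gamma-I$, so that $P-Q=I$ and $PQ=QP$. The engine is the telescoping identity obtained by iterating $P\,X_i=Q\,X_{i+1}$: using $X_{n+1}=X_1$ one gets $P^{n}X_1=P^{n-1}QX_2=\cdots=Q^{n}X_1$, so $(P^{n}-Q^{n})X_1=0$ is a consequence of the relations, and
\[
P^{n}-Q^{n}=(P-Q)\sum_{k=0}^{n-1}P^{\,n-1-k}Q^{k}=\sum_{k=0}^{n-1}P^{\,n-1-k}Q^{k}
\]
is the block I expect to survive. To upgrade this from a necessary relation to an equivalence of matrices I would reduce $\mathcal{C}$ directly. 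Replacing the first block-column by the sum of all block-columns converts it into $(I,\dots,I)^{t}$, since every block-row of $\mathcal{C}$ contains exactly one $P$ and one $-Q$ and $P-Q=I$. This fabricated identity block then serves as a pivot to clear its own row and column and to split off a summand $I_s$; iterating the same idea peels off $n-1$ copies of $I_s$ while the remaining corner accumulates $P^{n}-Q^{n}$. I would confirm the mechanism on the base cases $n=1,2,3$ (which yield $I$, then $P^{2}-Q^{2}=P+Q$, then $P^{3}-Q^{3}$) and then run the peeling in general.

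The principal obstacle is that neither $\Gamma$ nor $\Gamma-I$ is invertible over $\mathbb{Z}$ (their determinants involve $a_0$ and $a_s$), so straightforward block Gaussian elimination using $P$ or $Q$ as pivots is unavailable. Everything therefore rests on manufacturing genuinely unimodular pivots out of the single relation $P-Q=I$, together with the commutativity $PQ=QP$; this is exactly what forces the surviving block to be the symmetric expression $P^{n}-Q^{n}$ rather than something contaminated by denominators $\det\Gamma$ or $\det(\Gamma-I)$. Verifying that each operation in the peeling stays integral is the step that demands the most care.

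As a consistency check one has $\det\mathcal{C}=\det(P^{n}-Q^{n})=\prod_{j=0}^{n-1}\det\!\big(\Gamma-\zeta^{j}(\Gamma-I)\big)=\prod_{j=0}^{n-1}A(\zeta^{j})$ with $\zeta=e^{2\pi i/n}$, where the last equality uses $A(z)=\det(\Gamma-z(\Gamma-I))=\det(U+zV)$. This recovers the classical order formula for $H_{1}(M_{n},\mathbb{Z})$ and confirms that $\Gamma^{n}-(\Gamma-I)^{n}$ is the correct surviving block, hence the desired matrix of relations.
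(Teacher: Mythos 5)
Your opening reduction of $\mathcal{H}$ to the cokernel of the block circulant $\mathcal{C}=\operatorname{circ}(P,-Q,0,\dots,0)$ with $P=\Gamma$, $Q=\Gamma-I$ is sound (the window-to-free-generator Tietze argument and the unimodularity of $U+V$ are exactly right), and it reproduces the part of the argument that the paper actually carries out in Section 2; for the remaining step the paper gives no proof at all, citing Seifert's Satz 1, so the entire substance of your proposal rests on the peeling argument. That argument has a genuine gap at its second step. The mechanism that produces your first pivot --- every block-row of $\mathcal{C}$ contains exactly one $P$ and one $-Q$, so adding all block-columns to the first creates a column of identity blocks --- is destroyed by the very peel it enables. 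After splitting off the first $I_s$, the remaining $(n-1)\times(n-1)$ block matrix is
\begin{equation*}
\begin{pmatrix}
P+Q & -Q & 0 & \cdots & 0\\
Q & P & -Q & & 0\\
\vdots & & \ddots & \ddots & \\
Q & 0 & & P & -Q\\
Q & 0 & \cdots & 0 & P
\end{pmatrix},
\end{equation*}
whose block-rows sum to $P,P,\dots,P,P+Q$ rather than to $I$. So ``iterating the same idea'' fails literally at the next step: no column-summing produces a unimodular pivot, and your determinant consistency check cannot repair this, since equality of determinants does not pin down the cokernel.

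The gap is fixable, but the fix is the real content of the proof and it is absent from the proposal. A correct induction tracks the invariant shape: after $k$ peels the corner is, up to an overall sign,
\begin{equation*}
\begin{pmatrix}
\Sigma_k & -Q^{k} & 0 & \cdots & 0\\
Q & P & -Q & & \\
\vdots & & \ddots & \ddots & \\
Q & 0 & \cdots & 0 & P
\end{pmatrix},
\qquad
\Sigma_k=\sum_{j=0}^{k}P^{k-j}Q^{j}=P^{k+1}-Q^{k+1},
\end{equation*}
and the next pivot is manufactured not from row sums but from the congruence $Q^{k}\equiv(-1)^{k}I \pmod{P}$, valid because $Q=P-I$: adding the appropriate polynomial multiple of the second block-row to the first makes the $(1,2)$ entry equal to $\pm I$, and pivoting there turns the second row into $\mp\bigl(\Sigma_{k+1},0,-Q^{k+1},0,\dots,0\bigr)$ while leaving all lower rows in the required shape, so the invariant propagates and the last surviving block is $\pm(P^{n}-Q^{n})=\pm\bigl(\Gamma^n-(\Gamma-I)^n\bigr)$. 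Until you identify this (or an equivalent) inductive invariant together with the pivot-creating identity, the proposal is a plausible plan whose central step is unproved --- indeed your own caveat about which step ``demands the most care'' points precisely at the place where the proof does not yet exist.
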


One can consider the matrices $U$ and $V$ as combinatorial analogues of the Seifert matrix introduced in \cite{Seifert}. The matrix $\Gamma$ was also introduced by Seifert as $\Gamma=(U+V)^{-1}U$ by setting $V=-U^{t}.$ We also note that the Alexander polynomial can be expressed as $A(z)=\det(U+z\,V).$

\section{Preliminary observations}

Consider a $2$-bridge knot $K$ whose Alexander polynomial has the form $A(z)=a+(b-a)z+(1-2b)z^2+(b-a)z^3+a z^4.$ This is a general form for a knot of Seifert genus $2.$ (See, for example, \cite{Fox}). Denote by $M_{n}$ a cyclic $n$-fold covering of the three sphere $\mathbb{S}^3$  branched over knot $K.$ The aim of this paper is to describe explicitly the structure of the first homology group  $H_1(M_n,\mathbb{Z})$ for any natural $n.$

We refer to the content of the previous section to get the following combinatorial version of Seifert matrix
$$\Gamma=\left(\begin{array}{cccc}a& -1 + b& - b& -a\\a& b& -b& -a\\a& b& 1 - b& -a\\a& b& 1 - b& 1 - a\end{array}\right).$$
By making use of Lemma~\ref{Seifertlemma}, we conclude that the Abelian group $H_{1}(M_{n},\mathbb{Z})$ is represented by the matrix 
\begin{equation}\label{defineB}B(n)=\Gamma^{n}-(\Gamma-I)^{n}.\end{equation} 
See also \cite{Fox} for more detailed explanation. One can consider matrix $B(n)$ as $\mathbb{Z}$-linear operator from $\mathbb{Z}^4$ to $\mathbb{Z}^4$ acting by the rule $B(n):x\rightarrow B(n)x,\,x\in\mathbb{Z}^4.$ Define $\textrm{coker}\,B(n)$ as the factor space $\mathbb{Z}^4/\textrm{im}\,B(n),$  where  $\textrm{im}\,B(n)$ is the image of $B(n)$ in $\mathbb{Z}^4.$ Then one has
$$H_{1}(M_{n},\mathbb{Z})=\textrm{coker}\,B(n).$$

The structure of the Abelian group $\textrm{coker}\,B(n)$ is completely determined by the Smith Normal Form of matrix $B(n).$ More precisely,

$$\textrm{coker}\,B(n)=\mathbb{Z}_{d_1 }\oplus\mathbb{Z}_{{d_2}/{d_1}}\oplus\mathbb{Z}_{{d_3}/{d_2}}\oplus\mathbb{Z}_{{d_4}/{d_3}},$$ 
where $d_k$ is the greatest common divisor of  all $k\times k$  minors of matrix  $B(n).$
By definition, $\mathbb{Z}_{d}$ is the factor group $\mathbb{Z}/d\,\mathbb{Z},$ where $d$ is an  integer number. It can be positive, negative or zero. In particular, $\mathbb{Z}_{0}=\mathbb{Z}$ and 
$\mathbb{Z}_{-5}=\mathbb{Z}_{5}.$

\section{Recurrent sequences associated with Alexander polynomial.}

Consider the matrix $\Gamma$ from previous section. Characteristic polynomials for $\Gamma$ and $\Gamma-I$ are $P_{0}(\lambda)=a-(3a+b)\lambda+(1+3a+b)\lambda^2-2\lambda^3+\lambda^4$ and $P_{1}(\lambda)=a+(3a+b)\lambda+(1+3a+b)\lambda^2+2\lambda^3+\lambda^4$ respectively. Their product is 
$P_{0}(\lambda)P_{1}(\lambda)=a^{2}+(2a-3a^2-4a b-b^2)\lambda^{2}+(1-4a+9a^2-2b+6a b+b^2)\lambda^{4}+(-2+6a+2b)\lambda^{6}+\lambda^{8}.$ 

Hence, each entry of matrix $B(n)$ and the matrix $B(n)$ itself satisfy the following recursion 

\begin{align}\label{mainrecurs}
\nonumber &a^2 u(n) + (2 a - 3 a^2 - 4 a b - b^2) u(n+2) \\
&+ (1 - 4 a + 9 a^2 - 2 b + 6 a b + b^2) u(n+4) \\
\nonumber &+ (-2 + 6 a + 2 b) u(n+6) + u(n+8)=0.
\end{align}

We make the following important observation. Consider eigenvalues $\lambda_1,\lambda_2,\lambda_3,\lambda_4$ of the matrix $\Gamma.$ They are $$\lambda_{1,2}=\frac{1}{2}(1\pm\sqrt{1-6a-2b-2\sqrt{-4a+(3a+b)^2}})$$ and $$\lambda_{3,4}=\frac{1}{2}(1\pm\sqrt{1-6a-2b+2\sqrt{-4a+(3a+b)^2}}).$$ The list of eigenvalues of $\Gamma-I$ coincides with $\{-\lambda_1, -\lambda_2, -\lambda_3, -\lambda_4\},$ where $\{\lambda_1,\lambda_2,\lambda_3,\lambda_4\}$ are eigenvalues of $\Gamma.$

Now we introduce four integer sequences.  
$$s_{even}(n)=\frac{\lambda_2^n-\lambda_1^n}{\lambda_2-\lambda_1}+\frac{\lambda_4^n-\lambda_3^n}{\lambda_4-\lambda_3},$$ 

$$t_{even}(n)=\left(\frac{\lambda_2^n-\lambda_1^n}{\lambda_2-\lambda_1}-\frac{\lambda_4^n-\lambda_3^n}{\lambda_4-\lambda_3}\right)/\left(\Big(\frac{\lambda_4-\lambda_3}{2}\Big)^2-\Big(\frac{\lambda_2-\lambda_1}{2}\Big)^2\right),$$ 

$$s_{odd}(n)=\lambda_1^n+\lambda_2^n+\lambda_3^n+\lambda_4^n,$$ 

$$t_{odd}(n)=(\lambda_1^n+\lambda_2^n-\lambda_3^n-\lambda_4^n)/\left(\Big(\frac{\lambda_4-\lambda_3}{2}\Big)^2-\Big(\frac{\lambda_2-\lambda_1}{2}\Big)^2\right).$$  

We note that $\left((\frac{\lambda_4-\lambda_3}{2})^2-(\frac{\lambda_2-\lambda_1}{2})^2\right)=\sqrt{-4a+(3a+b)^2}.$

The fact that all of them are expressed as linear combinations  of $n$-th powers of eigenvalues of $\Gamma$ and $\Gamma-I$ implies that they satisfy the same recursive relation~(\ref{mainrecurs}).  

Then we also introduce two sequences. $$s(n)=\begin{cases} s_{even}(n),\textrm{ if }n\textrm{ is even}\\ s_{odd}(n),\textrm{ if }n\textrm{ is odd}\end{cases}\textrm{ and }\ t(n)=\begin{cases}t_{even}(n),\textrm{ if }n\textrm{ is even}\\ t_{odd}(n),\textrm{ if }n\textrm{ is odd}\end{cases}.$$  

The structure of matrix $B(n)$ is given by the following lemmas. 

\begin{lemma}\label{oddformB} 
Suppose $n$ is odd. Then the matrix $B(n)=s(n)L+t(n)R$ is a sum of two centrosymmetric matrices, where $L$ and $R$ are
$$L=\left(\begin{array}{cccc}
\frac{1}{2}& 0& 0& 0 \\ 
0& \frac{1}{2}& 0& 0 \\ 
0 & 0 &\frac{1}{2}& 0 \\
0 & 0 & 0&\frac{1}{2}
\end{array}\right)$$
and 
$$R=\left(\begin{array}{cccc}
\frac{a-b}{2}& -1+a+2b& a-b& -a\\ 
a & \frac{-a+b}{2}& a & 0\\ 
0 & a& \frac{-a+b}{2}& a\\ 
-a & a-b &-1+a+2b & \frac{a-b}{2}
\end{array}\right).$$
\end{lemma}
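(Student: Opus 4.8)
The plan is to treat $B(n)$ as a polynomial in the single matrix $\Gamma$ and to reduce the assertion to a scalar congruence modulo the characteristic polynomial $P_{0}(\lambda)$. Set $f_{n}(\lambda)=\lambda^{n}-(\lambda-1)^{n}$, so that, since $\Gamma$ and $\Gamma-I$ commute, $B(n)=\Gamma^{n}-(\Gamma-I)^{n}=f_{n}(\Gamma)$. As $P_{0}(\Gamma)=0$, the matrix $f_{n}(\Gamma)$ depends only on the residue of $f_{n}$ modulo $P_{0}$, a polynomial of degree at most $3$. Thus it suffices to prove, for odd $n$, the identity
$$f_{n}(\lambda)\equiv\tfrac{1}{2}\,s(n)+t(n)\,g(\lambda)\pmod{P_{0}(\lambda)},\qquad g(\lambda)=-\lambda^{2}+\lambda-\tfrac{3a+b}{2}.$$
Evaluating at $\Gamma$ then yields $B(n)=s(n)L+t(n)R$ with $L=\tfrac{1}{2}I$ and $R=g(\Gamma)=-\Gamma^{2}+\Gamma-\tfrac{3a+b}{2}I$, which already identifies the matrix $L$ of the statement.

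The decisive structural input, read off from the explicit eigenvalue formulas (equivalently from $P_{1}(\lambda)=P_{0}(-\lambda)$), is the pairing $\lambda_{1}+\lambda_{2}=1$ and $\lambda_{3}+\lambda_{4}=1$, so that $\lambda_{1}-1=-\lambda_{2}$ and $\lambda_{3}-1=-\lambda_{4}$. For odd $n$ this forces $f_{n}(\lambda_{1})=f_{n}(\lambda_{2})=\lambda_{1}^{n}+\lambda_{2}^{n}$ and $f_{n}(\lambda_{3})=f_{n}(\lambda_{4})=\lambda_{3}^{n}+\lambda_{4}^{n}$; that is, the degree-$\le 3$ residue of $f_{n}$ is \emph{constant on each pair}. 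The space of polynomials of degree $\le 3$ that are constant on $\{\lambda_{1},\lambda_{2}\}$ and on $\{\lambda_{3},\lambda_{4}\}$ is two–dimensional, and it is spanned by $1$ and $g$: indeed $\{\lambda_{1},\lambda_{2}\}$ and $\{\lambda_{3},\lambda_{4}\}$ are the root sets of the quadratic factors $\lambda^{2}-\lambda+c_{1}$, $\lambda^{2}-\lambda+c_{2}$ of $P_{0}$ with $c_{1,2}=\tfrac12(3a+b\pm D)$ and $D=\sqrt{-4a+(3a+b)^{2}}$, whence $g(\lambda_{1})=c_{1}-\tfrac{3a+b}{2}=\tfrac{D}{2}$ and $g(\lambda_{3})=c_{2}-\tfrac{3a+b}{2}=-\tfrac{D}{2}$. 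This explains why only the two sequences $s,t$ occur rather than four. Substituting $s(n)=\sum_{i}\lambda_{i}^{n}$ and $t(n)=(\lambda_{1}^{n}+\lambda_{2}^{n}-\lambda_{3}^{n}-\lambda_{4}^{n})/D$ into $\tfrac12 s(n)\pm t(n)\tfrac{D}{2}$ collapses to $\lambda_{1}^{n}+\lambda_{2}^{n}$ and $\lambda_{3}^{n}+\lambda_{4}^{n}$, matching $f_{n}$ at all four eigenvalues and hence establishing the congruence when the $\lambda_{i}$ are distinct.

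The remaining steps are mechanical. First, I would compute $\Gamma^{2}$ entrywise and form $R=-\Gamma^{2}+\Gamma-\tfrac{3a+b}{2}I$; a few sample checks confirm the displayed matrix (e.g.\ $(\Gamma^{2})_{11}=-a$ gives $R_{11}=a+a-\tfrac{3a+b}{2}=\tfrac{a-b}{2}$, and $(\Gamma^{2})_{24}=-a$ gives $R_{24}=a-a=0$). Second, both $L=\tfrac12 I$ and $R$ satisfy the centrosymmetry relation $M_{ij}=M_{5-i,\,5-j}$, so $s(n)L$ and $t(n)R$ are centrosymmetric, giving the claimed decomposition. Finally, the evaluation argument presumes $\lambda_{1},\dots,\lambda_{4}$ distinct; since both sides of $B(n)=s(n)L+t(n)R$ are polynomials in $a,b$, the identity extends to all $a,b$ by continuity, the distinct-eigenvalue locus being dense. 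I expect the only real obstacle to be bookkeeping — pinning down the spectral pairing cleanly and carrying out the $\Gamma^{2}$ computation without sign errors — while the conceptual core, namely the reduction to a pair-symmetric residue spanned by $\{1,g\}$, is short.
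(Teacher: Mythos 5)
Your proof is correct, but it takes a genuinely different route from the paper's. The paper's own proof is purely recursive: both sides of $B(n)=s(n)L+t(n)R$ satisfy the parity-preserving recursion~(\ref{mainrecurs}) (a five-term relation in steps of $2$, hence of order four on each parity class), so the identity need only be checked at $n=1,3,5,7$, and this finite check is done by symbolic computation of $B(1),B(3),B(5),B(7)$ in Mathematica. You instead argue spectrally: $B(n)=f_n(\Gamma)$ with $f_n(\lambda)=\lambda^n-(\lambda-1)^n$, and, reducing modulo the characteristic polynomial $P_0$, the pairing $\lambda_1+\lambda_2=\lambda_3+\lambda_4=1$ forces $f_n$ (for odd $n$) to be constant on each eigenvalue pair, which pins down the residue as $\tfrac{1}{2}s(n)+t(n)g(\lambda)$ with $g(\lambda)=-\lambda^2+\lambda-\tfrac{3a+b}{2}$; evaluating at $\Gamma$ yields $L=\tfrac{1}{2}I$ and $R=g(\Gamma)=-\Gamma^2+\Gamma-\tfrac{3a+b}{2}I$. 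I verified that $g(\Gamma)$ does equal the displayed matrix $R$ and is centrosymmetric, so your identification is right. Your approach buys conceptual content the paper's does not: it explains why exactly two sequences suffice and why $L$ is half the identity, it exhibits $R$ in closed form as a quadratic polynomial in $\Gamma$ (despite $\Gamma$ itself not being centrosymmetric), and it replaces four symbolic evaluations of $B(n)$ by the single computation of $\Gamma^2$; the paper's method, in exchange, is uniform (the identical recursion argument disposes of Lemma~\ref{evenformB}) and never meets genericity issues. Two points you should make explicit to close yours out: (i) the entrywise check $g(\Gamma)=R$ must be carried out for all sixteen entries, not just the two you sample — a routine computation of the same nature as the paper's Mathematica step; and (ii) the continuity step is not a formality, since integer pairs with collided eigenvalues do occur (e.g. $a=1$, $b=-1$ gives $-4a+(3a+b)^2=0$); it is legitimate because $s(n)$ and $t(n)$, being symmetric and divided-difference functions of the roots, are polynomials in $(a,b)$ — equivalently, they are the solutions of the recursion~(\ref{mainrecurs}) with the polynomial initial data (\ref{Basedatasodd})--(\ref{Basedatatodd}) — so both sides of the claimed identity are polynomial in $(a,b)$ and agree on the dense locus where the discriminant of $P_0$ does not vanish.
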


\begin{proof} We note that both sides of equality $B(n)=s(n)L+t(n)R$ satisfy the linear recursive
equation~(\ref{mainrecurs}). So, it is sufficient to prove Lemma for $n=1,3,5,7.$
By making use of symbolic calculation with Wolfram Mathematica 12.0 we deduce that 
\begin{align}\label{Basedatasodd}
\nonumber s(1)&=2,\,s(3)=2-9a-3b,\\ 
s(5)&=2-25a+45a^2-5b+30a b+5b^2,\\
\nonumber s(7)&=2-49a+189a^2-189a^3-7b+105a b\\
\nonumber&-189a^2b+14b^2-63a b^2-7b^3
\end{align} 
and  
\begin{align}\label{Basedatatodd}
\nonumber &t(1)=0,\,t(3)=-3,\\
&t(5)=5(-1+3a+b),\\
\nonumber &t(7)=-7(1-7a+9a^2-2b+6a b+b^2).
\end{align}

Then the result follows by the direct substitution.      
\end{proof}

\begin{lemma}\label{evenformB} 
Suppose $n$ is even. Then the matrix $B(n)=s(n)L+t(n)R$ is a sum of two anticentrosymmetric matrices, where $L$ and $R$ are   
$$L=\left(\begin{array}{cccc}
\frac{-1+2a}{2}& -1+b& -b& -a\\ 
a& \frac{-1+2b}{2}&-b&-a\\ 
a & b &\frac{1-2b}{2}&-a\\ 
a & b & 1-b &\frac{1-2a}{2}
\end{array}\right)$$
and $R$ is block matrix of the form 
$\left(\begin{array}{cc}
L & M\\ 
\hat{M} & \hat{L}
\end{array}\right),$ where 
$$L=\left(\begin{array}{cc}
\frac{-5a+6a^2+b+2a b}{2}& 1-2a-3b+3a b+b^2\\ 
a(-1+3a+b)& \frac{(3a+b)(-1+2b)}{2}
\end{array}\right),$$  

$$M=\left(\begin{array}{cc}
-a+b-3a b-b^2& -a(-1+3a+b)\\ 
a-3a b-b^2 & -a(3a+b)
\end{array}\right)$$ and $\widehat{\left(\begin{array}{cc}
x& y\\ 
z& w
\end{array}\right)}=\left(\begin{array}{cc}
-w& -z\\ 
-y& -x
\end{array}\right).$ 
\end{lemma}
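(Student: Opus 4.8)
The plan is to imitate the proof of Lemma~\ref{oddformB}. Both sides of the asserted identity $B(n)=s(n)L+t(n)R$ are solutions of the linear recurrence~(\ref{mainrecurs}): the left side because every entry of $B(n)$ is, and the right side because $s(n)$ and $t(n)$ are. Since~(\ref{mainrecurs}) advances the index in steps of two and links the five terms $u(n),u(n+2),\dots,u(n+8)$, it acts as an order-four recurrence on the even-indexed subsequence, and its leading coefficient is $1$, so the forward recursion is always solvable. Consequently, if $B(n)$ and $s(n)L+t(n)R$ agree at the four base values $n=2,4,6,8$, then induction forces agreement for every even $n$. Thus the essential work is a finite symbolic check, together with the verification that $L$ and $R$ are anticentrosymmetric.

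Before computing, I would record the structural reason for the anticentrosymmetry, which also serves as a check and shortens the base-case work. Write $J$ for the $4\times4$ exchange matrix (ones on the anti-diagonal) and $j$ for its $2\times2$ analogue, so that $J=\bigl(\begin{smallmatrix}0&j\\ j&0\end{smallmatrix}\bigr)$. A direct computation from the explicit $\Gamma$ of Section~3 gives the identity
\[
J\,\Gamma\,J=I-\Gamma,
\]
equivalently $J\Gamma J=-(\Gamma-I)$ and $J(\Gamma-I)J=-\Gamma$. Feeding these into $B(n)=\Gamma^n-(\Gamma-I)^n$ yields
\[
J\,B(n)\,J=\bigl(-(\Gamma-I)\bigr)^n-(-\Gamma)^n=(-1)^{n+1}\bigl(\Gamma^n-(\Gamma-I)^n\bigr)=(-1)^{n+1}B(n),
\]
so $B(n)$ is centrosymmetric for odd $n$ (recovering Lemma~\ref{oddformB}) and anticentrosymmetric for even $n$. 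For the stated $L$ and $R$ I would confirm $JLJ=-L$ and $JRJ=-R$ directly; for $R$ this is automatic from its block form, because the definition $\widehat{X}=-jXj$ makes $\bigl(\begin{smallmatrix}L&M\\ \widehat M&\widehat L\end{smallmatrix}\bigr)$ satisfy $JRJ=-R$ whenever $j^2=I$.

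For the base cases I would first tabulate the even-index scalars $s(2),s(4),s(6),s(8)$ and $t(2),t(4),t(6),t(8)$, which are explicit polynomials in $a,b$ obtained from $s_{even}$ and $t_{even}$ (the analogues of~(\ref{Basedatasodd}) and~(\ref{Basedatatodd})). In parallel I would form $B(2),B(4),B(6),B(8)$ from $B(n)=\Gamma^n-(\Gamma-I)^n$, and then verify the entrywise identity $B(n)=s(n)L+t(n)R$ at these four indices by symbolic computation, exactly as in Lemma~\ref{oddformB}. The anticentrosymmetry $JB(n)J=-B(n)$ halves the labour: it suffices to match the first two rows of each matrix, the remaining entries being determined by the symmetry.

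The main obstacle is purely computational bookkeeping, not conceptual. The even-index values of $s$ and $t$ are genuine quartics in $a,b$, and $B(6),B(8)$ carry heavy polynomial entries, so the real danger is an arithmetic slip while matching sixteen entries against $s(n)L+t(n)R$. Nothing else stands in the way once the identity $J\Gamma J=I-\Gamma$ is in hand and the closed forms for $s(n),t(n)$ are tabulated; the symmetry reduction together with a computer-algebra check makes the verification routine.
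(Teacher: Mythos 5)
Your proposal is correct and follows essentially the same route as the paper: both sides of $B(n)=s(n)L+t(n)R$ satisfy the recursion~(\ref{mainrecurs}), which steps the index by two with leading coefficient $1$, so four even base cases checked symbolically settle the claim --- the paper uses $n=0,2,4,6$ (with the initial data $s(0)=0,\,s(2)=2,\,t(0)=t(2)=0,\dots$) where you use $n=2,4,6,8$, and either choice works since forward induction then covers all even $n\geq 2$. The one genuine addition on your side is the identity $J\,\Gamma\,J=I-\Gamma$, which gives $J\,B(n)\,J=(-1)^{n+1}B(n)$ and thus explains structurally why $B(n)$ is anticentrosymmetric for even $n$ (and centrosymmetric for odd $n$, recovering the symmetry in Lemma~\ref{oddformB}); the paper obtains this only as an outcome of direct computation, so your observation both halves the entry-matching and supplies a conceptual reason the paper leaves implicit.
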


\begin{proof} Since matrix $B(n)=s(n)L+t(n)R$ is a solution of linear recursive equation~(\ref{mainrecurs}), it is sufficient to prove Lemma for $n=0,2,4,6.$
We have 
\begin{align}\label{Basedataseven}
\nonumber &s(0)=0,\,s(2)=2,\\
&s(4)=2(1-3a-b),\\ 
\nonumber &s(6)=2 - 18 a + 27 a^2 - 4 b + 18 a b + 3 b^2
\end{align} 
and  
\begin{equation}\label{Basedatateven}
t(0)=0,\,t(2)=0,\,t(4)=-2,\,t(6)=-4+9a+3b.
\end{equation}

Then the result follows by the direct calculations.   
\end{proof}
\bigskip

In what follows, we will call the sequences $s(n)$ and $t(n)$ to be \textit{associated} with Alexander polynomial $A(z).$ They are unique solutions of  of linear recursive equation~(\ref{mainrecurs})  with initial data (\ref{Basedatasodd}) -- (\ref{Basedatateven}).

\section{Prelminary results}

The next theorem is reformulation of a result previously proved by various authors (\cite{Fox}, \cite{Mul}, \cite{MulVes}).

\begin{theorem}\label{caseaiszero} Let $K$ be a $2$-bridge knot of genus $1$ with Alexander polynomial $A(z)=b+(1-2b)z+b\,z^2.$ We set $\alpha(n)=\left(\frac{1+x}{2}\right)^n+\left(\frac{1-x}{2}\right)^n$ and $\beta(n)=\frac{1}{x}\left(\left(\frac{1+x}{2}\right)^n-\left(\frac{1-x}{2}\right)^n\right),$ where $x=\sqrt{1-4b}.$  

Then the first homology group of cyclic $n$-fold covering $M_{n}$ of the three sphere $\mathbb{S}^3$ branched over knot $K$ has the following structure

$$H_1(M_n,\mathbb{Z})=\begin{cases}\mathbb{Z}_{\alpha(n)}\oplus\mathbb{Z}_{\alpha(n)}\textit{ if }n\textit{ is odd,}\\ \mathbb{Z}_{\beta(n)}\oplus\mathbb{Z}_{(4b-1)\beta(n)}\textit{ if }n\textit{ is even.}\end{cases}$$
\end{theorem}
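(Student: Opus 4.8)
The plan is to specialize the machinery of Section~2 to the genus one situation, where the column vectors $X_i$ have length $s=2$ and hence $\Gamma$ is a $2\times 2$ matrix. First I would compute $\Gamma=(U+V)^{-1}U$ explicitly from $A(z)=b+(1-2b)z+bz^2$, obtaining $\Gamma=\begin{pmatrix} b & -b\\ b & 1-b\end{pmatrix}$, whose characteristic polynomial is $\lambda^2-\lambda+b$. Thus $\Gamma$ has eigenvalues $\lambda_{1,2}=\tfrac{1\pm x}{2}$ with $x=\sqrt{1-4b}$, so that $\lambda_1+\lambda_2=1$ and $\lambda_1\lambda_2=b$; moreover the eigenvalues of $\Gamma-I$ are $-\lambda_1,-\lambda_2$, giving it characteristic polynomial $\lambda^2+\lambda+b$. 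In this notation $\alpha(n)=\lambda_1^n+\lambda_2^n$ and $\beta(n)=(\lambda_1^n-\lambda_2^n)/(\lambda_1-\lambda_2)$.

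Next I would use Cayley--Hamilton in the form $\Gamma^2=\Gamma-bI$ to reduce powers. The standard Lucas-type expansion yields $\Gamma^n=\beta(n)\,\Gamma-b\,\beta(n-1)\,I$, and since $\Gamma-I$ has eigenvalues $-\lambda_i$ one gets $(\Gamma-I)^n=(-1)^{n+1}\big(\beta(n)(\Gamma-I)+b\,\beta(n-1)\,I\big)$. Substituting into $B(n)=\Gamma^n-(\Gamma-I)^n$ and separating by the parity of $n$ is the crux: for $n$ odd the $\Gamma$-terms cancel and one is left with $B(n)=(\beta(n)-2b\,\beta(n-1))\,I$, which collapses to $B(n)=\alpha(n)\,I$ after the scalar identity $\beta(n)-2b\,\beta(n-1)=\alpha(n)$ (immediate from $1-2\lambda_2=\lambda_1-\lambda_2$); for $n$ even the $I$-terms cancel and one obtains $B(n)=\beta(n)\,(2\Gamma-I)$.

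From these two normal forms the homology is read off via the Smith normal form as in Section~3. For $n$ odd, $B(n)=\alpha(n)\,I$ is already diagonal, so $\textrm{coker}\,B(n)=\mathbb{Z}_{\alpha(n)}\oplus\mathbb{Z}_{\alpha(n)}$. For $n$ even I would diagonalize $C:=2\Gamma-I=\begin{pmatrix} 2b-1 & -2b\\ 2b & 1-2b\end{pmatrix}$: its entries have gcd $1$ (since $2b-(2b-1)=1$) while $\det C=4b-1$, so its Smith form is $\textrm{diag}(1,4b-1)$. Writing $C=PDQ$ with $P,Q$ unimodular and $D=\textrm{diag}(1,4b-1)$ gives $B(n)=\beta(n)\,C=P\,(\beta(n)D)\,Q$, whence the invariant factors of $B(n)$ are $\beta(n)$ and $(4b-1)\beta(n)$, and $\textrm{coker}\,B(n)=\mathbb{Z}_{\beta(n)}\oplus\mathbb{Z}_{(4b-1)\beta(n)}$, using the convention $\mathbb{Z}_{-d}=\mathbb{Z}_{d}$.

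The step I expect to require the most care is the parity-dependent collapse of $B(n)$ together with the claim that scaling by $\beta(n)$ multiplies each invariant factor of $2\Gamma-I$ by $\beta(n)$; the latter hinges precisely on the gcd of the entries of $2\Gamma-I$ being $1$, so that no extra factor of $\beta(n)$ is absorbed into a single invariant factor. As a sanity check, $n=1$ gives $B(1)=I$ and trivial homology, while $n=2$ gives $\mathbb{Z}_{\beta(2)}\oplus\mathbb{Z}_{(4b-1)\beta(2)}=\mathbb{Z}_{4b-1}=\mathbb{Z}_{|A(-1)|}$, the classical value for the double branched cover.
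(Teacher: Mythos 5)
Your proof is correct, but it takes a genuinely different route from the paper's. The paper does not use the combinatorial matrix $\Gamma=(U+V)^{-1}U$ of Section~2 here at all: it invokes Funcke's classification (\cite{Funke}, Satz 5.1) to write the slope of a genus-one $2$-bridge knot as $2\lambda_1+\frac{1}{-2\lambda_2}$, takes the geometric Seifert matrix $V=\left(\begin{smallmatrix}\lambda_1 & 1\\ 0 & \lambda_2\end{smallmatrix}\right)$ with $b=\lambda_1\lambda_2$, forms Seifert's $\Gamma=(V-V^{t})^{-1}V$, and computes $H(n)=\Gamma^n-(\Gamma-\mathtt{1})^n$ directly, obtaining $\alpha(n)I$ for odd $n$ and $\beta(n)\left(\begin{smallmatrix}1 & 2\lambda_2\\ -2\lambda_1 & -1\end{smallmatrix}\right)$ for even $n$, whose Smith form is $\mathrm{diag}(1,4\lambda_1\lambda_2-1)$. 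You instead stay entirely inside the paper's Alexander-polynomial framework: you build $\Gamma=\left(\begin{smallmatrix}b & -b\\ b & 1-b\end{smallmatrix}\right)$ from $A(z)$, invoke Lemma~\ref{Seifertlemma}, and use Cayley--Hamilton and Lucas-type identities to get the parity split $B(n)=\alpha(n)I$ (odd) and $B(n)=\beta(n)(2\Gamma-I)$ (even); note that your $2\Gamma-I$ is exactly $B(2)$, matching the paper's later observation in the proof of its Plans-type theorem. Your route is self-contained (no appeal to Funcke's geometric classification) and runs exactly parallel to how the paper treats the genus-two case in Theorems~\ref{HomoddB} and~\ref{HomevenB}, so it unifies the two genera; what it gives up is the geometric grounding, since it rests on Lemma~\ref{Seifertlemma} (hence on the $2$-bridge hypothesis via the Fox-calculus presentation), whereas the paper's argument works directly from an explicit Seifert matrix and Seifert's original Satz 1. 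Your individual steps all check out: the identity $\beta(n)-2b\,\beta(n-1)=\alpha(n)$ follows from $1-2\lambda_2=\lambda_1-\lambda_2$, the gcd of the entries of $2\Gamma-I$ is indeed $1$ with determinant $4b-1$, and the scaling argument for the Smith form (including degenerate cases such as $\beta(n)=0$, where the paper's convention $\mathbb{Z}_0=\mathbb{Z}$ applies) is sound.
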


\begin{proof}
By K. Funcke (\cite{Funke}, Sats 5.1), any $2$-bridge knot of genus $1$ has a slope $p/q=2\lambda_{1}+\frac{1}{-2\lambda_{2}}$ for some integers $\lambda_{1},\lambda_{2}.$ In this case, Seifert matrix of the knot has a form $V=\left(\begin{array}{cc} \lambda_{1}& 1 \\ 0 & \lambda_{2}\end{array}\right).$ One can recognize Alexander polynomial $A(z)$ as determinant of the matrix $V-zV^{t},$ where $V^{t}$ is transpose of matrix $V.$ Therefore, $A(z)=\lambda_{1}\lambda_{2}+(1-2\lambda_{1}\lambda_{2})z+\lambda_{1}\lambda_{2}z^2.$ That is $b=\lambda_{1}\lambda_{2}.$ 

Following Seifert we set $\Gamma=(V-V^{t})^{-1}V.$ Then by Seifert theorem (\cite{Seifert}, Satz 1) $H_1(M_n,\mathbb{Z})$ is an Abelian group given by the matrix $H(n)=\Gamma^n-(\Gamma-\mathtt{1})^n.$ By direct calculations we have $H(n)=\left(\begin{array}{cc} \alpha(n) & 0\\ 0& \alpha(n)\end{array}\right)$ if $n$ is odd and $H(n)=\beta(n)\left(\begin{array}{cc} 1& 2 \lambda_{2}\\ -2 \lambda_{1}& -1\end{array}\right)$ if $n$ is even. Since the Smith Normal Form of matrix $\left(\begin{array}{cc} 1& 2 \lambda_{2}\\ -2 \lambda_{1}& -1\end{array}\right)$ is $\left(\begin{array}{cc} 1& 0 \\ 0& 4\lambda_{1}\lambda_{2}-1\end{array}\right)$ the result follows.
\end{proof}
\bigskip

Theorem~\ref{caseaiszero} shows that the structure of the first homology group $H_1(M_n,\mathbb{Z})$ completely defined by the Alexander polynomial of the $2$-bridge knot $K.$ This fails to be true for $3$-bridge knots. Indeed, consider two knots $6_{1}$ and $9_{46}$ shown on figures  $50$ and $51$ in the book \cite{Fox}. The first one is a $2$-bridge knot while the other is a  $3$-bridge knot. They share the same Alexander polynomial $A(z)=-2+5z-2z^2$ but have the different Seifert matrices $\left(\begin{array}{cc} 1& 1 \\ 0& -2\end{array}\right)$ and $\left(\begin{array}{cccc}1& 0& 0& 0\\ 0&-1& 0& 0\\  1& 0& 1& 1\\ -1& -1& 0& 0\end{array}\right)$ respectively. Consequently, the first homology group of their cyclic coverings are 
$$6_{1}:\begin{cases}\mathbb{Z}_{\alpha(n)}\oplus\mathbb{Z}_{\alpha(n)}\textit{ if n is odd,}\\ \mathbb{Z}_{\beta(n)}\oplus\mathbb{Z}_{9\beta(n)}\textit{ if n is even,}\end{cases}\textrm{ and    }9_{46}:\begin{cases}\mathbb{Z}_{\alpha(n)}\oplus\mathbb{Z}_{\alpha(n)}\textit{ if n is odd,}\\ \mathbb{Z}_{3\beta(n)}\oplus\mathbb{Z}_{3\beta(n)}\textit{ if n is even.}\end{cases}$$  

Here $\alpha(n)=2^{n}+(-1)^n$ and $\beta(n)=\frac{1}{3}(2^{n}-(-1)^n).$

\section{Main results}

Our first result is given by the following theorem.

\begin{theorem}\label{HomoddB} Let $K$ be a $2$-bridge knot with Alexander polynomial $A(z)=a+(b-a)z+(1-2b)z^2+(b-a)z^3+a z^4.$ Let $s(n)$ and $t(n)$ be the associated with $A(z)$ recurrent sequences. Suppose that $n$ is odd. Consider the two following sequences $\hat\alpha(n)=\gcd(t(n),\frac{s(n)+(5a-b)t(n)}{2})$ and $\hat\beta(n)=\frac{s(n)^2+(4a-(3a+b)^2)t(n)^2}{4\hat\alpha(n)}.$

Then the first homology group of cyclic $n$-fold covering $M_{n}$ of the three sphere $\mathbb{S}^3$ branched over knot $K$ has the following structure
$$H_1(M_n,\mathbb{Z})=\mathbb{A}\oplus\mathbb{A},\textit{ where }\mathbb{A}=\mathbb{Z}_{\hat\alpha(n)}\oplus\mathbb{Z}_{\hat\beta(n)}.$$
\end{theorem}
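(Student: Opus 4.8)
The plan is to compute the Smith Normal Form of $B(n)$ for odd $n$ by exploiting the explicit decomposition $B(n)=s(n)L+t(n)R$ from Lemma~\ref{oddformB}. First I would substitute the matrices $L$ and $R$ to write $B(n)$ entrywise, obtaining a $4\times 4$ matrix whose entries are integer-linear combinations of $s(n)$ and $t(n)$. Since both $L$ and $R$ are centrosymmetric (Lemma~\ref{oddformB}), so is $B(n)$, and I would use this symmetry to block-diagonalize. The standard technique is to conjugate by the orthogonal-type involution that exchanges $x_i\leftrightarrow x_{5-i}$: a centrosymmetric $4\times 4$ matrix becomes block-diagonal with two $2\times 2$ blocks after the change of basis $y_i=x_i+x_{5-i}$, $z_i=x_i-x_{5-i}$. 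Because this change of basis is realized by a unimodular integer matrix (up to a harmless factor of $2$ that must be tracked carefully), the cokernel of $B(n)$ splits as a direct sum of the cokernels of the two blocks. I expect the two blocks to be congruent (or to have the same Smith form), which is what produces the symmetric answer $\mathbb{A}\oplus\mathbb{A}$.

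Next I would identify the single $2\times 2$ block, call it $B_0(n)$, whose cokernel is $\mathbb{A}=\mathbb{Z}_{\hat\alpha(n)}\oplus\mathbb{Z}_{\hat\beta(n)}$. For a $2\times 2$ integer matrix the Smith Normal Form is governed by two invariants: $d_1=\gcd$ of all entries, and $d_2=|\det|$, with the group being $\mathbb{Z}_{d_1}\oplus\mathbb{Z}_{d_2/d_1}$. So I would compute $\gcd$ of the entries of $B_0(n)$ and show it equals $\hat\alpha(n)=\gcd\!\left(t(n),\frac{s(n)+(5a-b)t(n)}{2}\right)$, and compute $\det B_0(n)$ and show it equals $\hat\alpha(n)\,\hat\beta(n)=\frac{s(n)^2+(4a-(3a+b)^2)t(n)^2}{4}$. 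The determinant computation is the cleaner of the two: since $B_0(n)$ is linear in $s(n),t(n)$, its determinant is a quadratic form $\alpha\,s(n)^2+\beta\,s(n)t(n)+\gamma\,t(n)^2$ in $s(n),t(n)$ with fixed coefficients in $a,b$, which I can read off from the entries of $L$ and $R$; matching it to the stated expression is a direct verification that the $s(n)t(n)$ cross-term vanishes.

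The main obstacle, and where I would concentrate the care, is the $\gcd$ computation and the factors of $2$. The entries of $B_0(n)$ are half-integers in general (note $L$ and $R$ carry halves), so the block itself and the change-of-basis matrix must be normalized so that $B_0(n)$ is genuinely integral; one must check that the parities of $s(n)$ and $t(n)$ for odd $n$ (visible from the base data~(\ref{Basedatasodd})--(\ref{Basedatatodd}): $s$ is even and $t$ is a multiple of its index) make $\frac{s(n)+(5a-b)t(n)}{2}$ an integer and make the splitting into two integer blocks valid. Once integrality is secured, the $\gcd$ of the four entries of $B_0(n)$ reduces, after integer row/column operations, to the two generators $t(n)$ and $\frac{s(n)+(5a-b)t(n)}{2}$ appearing in $\hat\alpha(n)$; I would verify this by exhibiting explicit unimodular column and row operations that clear out the other entries, reducing the entry-$\gcd$ to exactly those two quantities. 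Combining $d_1=\hat\alpha(n)$ with $d_2=\hat\alpha(n)\hat\beta(n)$ gives $\mathbb{A}=\mathbb{Z}_{\hat\alpha(n)}\oplus\mathbb{Z}_{\hat\beta(n)}$, and the centrosymmetric splitting yields $H_1(M_n,\mathbb{Z})=\mathbb{A}\oplus\mathbb{A}$.
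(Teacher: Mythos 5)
Your overall strategy---split $B(n)$ into two $2\times 2$ blocks and read off each block's Smith form from the entry-gcd and the determinant---is exactly the paper's strategy, and your determinant and gcd computations would go through once a splitting is established. But the mechanism you propose for the splitting is a genuine gap, not a detail. The change of basis $y_i=x_i+x_{5-i}$, $z_i=x_i-x_{5-i}$ is \emph{not} unimodular: its matrix has determinant $\pm4$, and cokernels of integer matrices are preserved only under unimodular row and column transformations. The ``harmless factor of $2$'' is precisely where the content of the theorem lives. In general a centrosymmetric integer matrix does not have cokernel isomorphic to the direct sum of the cokernels of its two symmetrized blocks: for $\left(\begin{smallmatrix}1&1\\ 1&1\end{smallmatrix}\right)$ the cokernel is $\mathbb{Z}$, while the symmetrized blocks $(2)$ and $(0)$ would give $\mathbb{Z}_{2}\oplus\mathbb{Z}$. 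And you cannot dismiss $2$-torsion in this theorem: for the knot $6_2$ ($a=-1$, $b=2$) and $n=5$ one has $\hat\alpha(5)=\hat\beta(5)=2$, so $H_1(M_5,\mathbb{Z})\cong(\mathbb{Z}_2)^4$ is pure $2$-torsion---exactly the part of the group your transformation can distort. (A smaller inaccuracy: your parity claim that $s(n)$ is even for odd $n$ is false, e.g.\ $s(3)=2-9a-3b=5$ for $6_2$; what is true, and what integrality actually needs, is that $s(n)+(a-b)t(n)$ is even.)

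The paper avoids all of this by not using centrosymmetry as the splitting device. It writes $B(n)$ with entries $\alpha=\frac{s(n)+(a-b)t(n)}{2}$, $\beta=(-1+a+2b)t(n)$, $\gamma=(a-b)t(n)$, $\delta=-a\,t(n)$, and performs four explicit elementary (hence unimodular) operations---column $2$ minus column $4$, column $3$ minus column $1$, row $1$ plus row $3$, row $4$ plus row $2$---which genuinely block-diagonalize $B(n)$ over $\mathbb{Z}$ into $\left(\begin{smallmatrix}\alpha&\beta-\delta\\ -\delta&\alpha-\gamma\end{smallmatrix}\right)$ and $\left(\begin{smallmatrix}\alpha-\gamma&-\delta\\ \beta-\delta&\alpha\end{smallmatrix}\right)$. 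Note these are \emph{not} your symmetrized blocks $A\pm BJ$; the operations exploit the specific relations among the entries (the zeros at positions $(2,4)$ and $(3,1)$ and the recurrence of $\alpha,\beta,\gamma,\delta$), not merely the centrosymmetric symmetry. To repair your argument you would either have to produce such explicit unimodular operations yourself---at which point centrosymmetry plays no role---or give a separate argument that the $2$-primary part of the cokernel agrees with that of the symmetrized blocks, which your outline does not supply.
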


\begin{proof}
Taking into account lemma \ref{oddformB} we conclude that matrix $B(n)$ which represents the first homology group $H_1(M_n,\mathbb{Z})$ has the form 
$$B(n)=\left(\begin{array}{cccc}
\alpha& \beta & \gamma & \delta\\ 
-\delta& \alpha-\gamma& -\delta& 0\\ 
0 & -\delta & \alpha-\gamma &-\delta\\ 
\delta & \gamma & \beta &\alpha
\end{array}\right),$$ where $\alpha=\frac{s(n)+(a-b)t(n)}{2}, \beta=(-1+a+2b)t(n), \gamma=(a-b)t(n), \delta=-a\,t(n).$ To describe the structure of the first homology group of we have to find the Smith Normal Form of its relation matrix $B(n).$ The following sequence of elementary row-column transformations significantly simplify our task. 
$$\left(\begin{array}{cccc}
\alpha& \beta & \gamma & \delta\\ 
-\delta& \alpha-\gamma& -\delta& 0\\ 
0 & -\delta & \alpha-\gamma &-\delta\\ 
\delta & \gamma & \beta &\alpha
\end{array}\right)\to
\left(\begin{array}{cccc}
\alpha& \beta-\delta & \gamma & \delta\\ 
-\delta& \alpha-\gamma& -\delta& 0\\ 
0 & 0 & \alpha-\gamma &-\delta\\ 
\delta & \gamma-\alpha & \beta &\alpha
\end{array}\right)\to$$

$$\left(\begin{array}{cccc}
\alpha& \beta-\delta & \gamma-\alpha & \delta\\ 
-\delta& \alpha-\gamma& 0 & 0\\ 
0 & 0 & \alpha-\gamma & -\delta\\ 
\delta & \gamma-\alpha & \beta-\delta &\alpha
\end{array}\right)\to
\left(\begin{array}{cccc}
\alpha& \beta-\delta & 0 & 0\\ 
-\delta& \alpha-\gamma& 0 & 0\\ 
0 & 0 & \alpha-\gamma & -\delta\\ 
0 & 0 & \beta-\delta & \alpha
\end{array}\right).$$
 
So $B(n)$ splits into two $2\times2$ blocs 
$\left(\begin{array}{cc}
\alpha& \beta-\delta \\ 
-\delta& \alpha-\gamma 
\end{array}\right)$ and $\left(\begin{array}{cc}
\alpha-\gamma & -\delta\\ 
\beta-\delta & \alpha
\end{array}\right).$ They have the same Smith Normal Form. The block 
$\left(\begin{array}{cc}\alpha& \beta-\delta \\ 
-\delta& \alpha-\gamma \end{array}\right)$ is equivalent to $\left(\begin{array}{cc}d_1& 0 \\  0 & d_2 \end{array}\right),$ where $d_1=\gcd(\alpha,\,\beta-\delta,\,-\delta,\,\alpha-\gamma)=\gcd(\alpha,\,\beta,\,\gamma,\,\delta)$ and $d_2=\det\left(\begin{array}{cc}\alpha& \beta-\delta \\ 
-\delta& \alpha-\gamma \end{array}\right)/d_1=\frac{\alpha^2-\alpha\,\gamma+\beta\,\delta-\delta^2}{d_1}.$ After substituting $\alpha,\,\beta,\,\gamma$ and $\delta$ with their expressions through $a,\,b,\,s,\,t$ we get $d_1=\gcd(t(n),\frac{s(n)+(5a-b)t(n)}{2})$ and $d_2=\frac{s(n)^2+(4a-(3a+b)^2)t(n)^2}{4d_1}.$ Finally, the Smith Normal Form of $B(n)$ is diagonal matrix $\textrm{diag}(d_1,\,d_1,\,d_2,\,d_2).$ 
\end{proof}

The second result described the structure of the first homology group for the case of even $n.$ For the sake of simplicity we will write $s,\,t,\,\mu,\,\zeta$ instead of $s(n),\,t(n),\,\mu(n),\,\zeta(n).$ 

\begin{theorem}\label{HomevenB}
Let $K$ be a knot with Alexander polynomial $A(z)=a+(b-a)z+(1-2b)z^2+(b-a)z^3+a z^4.$ Let $s=s(n)$ and $t=t(n)$ be the associated with $A(z)$ recurrent sequences. Suppose that $n$ is even. We set $k=1+4a-4b,\,\zeta=\frac{s+(5a-b)t}{2}$ and $\mu=\frac{s^2+(4a-(3a+b)^2)t^2}{4}.$ Then the first homology group of cyclic $n$-fold covering $M_{n}$ of the three sphere $\mathbb{S}^3$ branched over knot $K$ has the following structure 
$$H_1(M_{n},\mathbb{Z})=\mathbb{Z}_{d_1}\oplus\mathbb{Z}_{d_2/d_1}\oplus\mathbb{Z}_{d_3/d_2}\oplus\mathbb{Z}_{d_4/d_3},$$
where
$d_1=\gcd(t,\,\zeta), d_2=\gcd(\mu,\,\zeta\,t, k\,t^2), d_3=\mu\,\gcd(k\,t, \zeta)$ and $d_4 = k\,\mu^2.$
\end{theorem}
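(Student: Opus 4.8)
The plan is to compute the four determinantal divisors $d_1,d_2,d_3,d_4$ of $B(n)$ directly, in the spirit of the proof of Theorem~\ref{HomoddB}, but starting from the anticentrosymmetric presentation $B(n)=s\,L+t\,R$ supplied by Lemma~\ref{evenformB}. The single algebraic fact that organizes the whole computation is the identity
\[
\mu=\zeta^{2}-(5a-b)\,\zeta\,t+a\,k\,t^{2},
\]
obtained by substituting $s=2\zeta-(5a-b)t$ into $\mu=\tfrac14\big(s^{2}-D\,t^{2}\big)$, where $D=(3a+b)^{2}-4a$; a short check gives $(5a-b)^{2}-D=4ak$. This identity shows at once that $\mu$ lies in the ideal $(\zeta^{2},\zeta t,t^{2})$, and it is the source of every divisibility relation between the $d_k$. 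I would record it first and use it throughout.

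For the top divisor I would avoid expanding the $4\times4$ determinant and instead use the eigenvalue factorization $\det B(n)=\prod_{i=1}^{4}\big(\lambda_i^{\,n}-(\lambda_i-1)^{\,n}\big)$. Since $\lambda_1+\lambda_2=\lambda_3+\lambda_4=1$, the eigenvalues of $\Gamma-I$ are $-\lambda_2,-\lambda_1,-\lambda_4,-\lambda_3$, so for even $n$ each factor becomes a difference $\lambda_i^{\,n}-\lambda_{i'}^{\,n}$ and the product collapses to $\big[(\lambda_1^{\,n}-\lambda_2^{\,n})(\lambda_3^{\,n}-\lambda_4^{\,n})\big]^{2}$. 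Writing $p=\frac{\lambda_2^{n}-\lambda_1^{n}}{\lambda_2-\lambda_1}$ and $q=\frac{\lambda_4^{n}-\lambda_3^{n}}{\lambda_4-\lambda_3}$, one has $pq=\mu$ and $\big[(\lambda_2-\lambda_1)(\lambda_4-\lambda_3)\big]^{2}=(1-6a-2b)^{2}-4D=1+4a-4b=k$, whence $d_4=\det B(n)=k\,\mu^{2}$.

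For $d_1,d_2,d_3$ I would first halve the size of the problem using the anticentrosymmetry. Writing $B(n)=\left(\begin{smallmatrix}P&Q\\ \widehat Q&\widehat P\end{smallmatrix}\right)$ in $2\times2$ blocks and setting $J=\left(\begin{smallmatrix}0&1\\1&0\end{smallmatrix}\right)$, one checks $J\widehat X=-XJ$ for every $2\times2$ block $X$; multiplying $B(n)$ on the left and then on the right by the unimodular matrix $\left(\begin{smallmatrix}I&J\\0&I\end{smallmatrix}\right)$ therefore clears the upper-right block and yields the block lower-triangular matrix $\left(\begin{smallmatrix}X&0\\ \widehat Q&Y\end{smallmatrix}\right)$ with $X=P-QJ$ and $Y=\widehat P+\widehat Q J$, which has the same Smith Normal Form as $B(n)$. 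I would then read off the determinantal divisors from this reduced matrix: writing every entry as a $\mathbb{Z}$-combination of $\zeta$ and $t$ gives $d_1=\gcd(t,\zeta)$; the $2\times2$ minors, reduced by the identity above, are integer combinations of $\mu,\ \zeta t,\ k t^{2}$, giving $d_2=\gcd(\mu,\zeta t,k t^{2})$; and the $3\times3$ minors are each divisible by $\mu$ and reduce to $\mu$ times combinations of $kt$ and $\zeta$, giving $d_3=\mu\,\gcd(kt,\zeta)$.

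Finally I would verify the chain $d_1\mid d_2\mid d_3\mid d_4$ — each step follows from the displayed identity: $\mu=\zeta^{2}-(5a-b)\zeta t+akt^{2}\in(\zeta,t)$ gives $d_1\mid d_2$, the relation $d_2\mid\mu\mid d_3$ is immediate, and $k\mu=k\zeta^{2}-(5a-b)\zeta(kt)+a(kt)^{2}$ shows $\gcd(kt,\zeta)\mid k\mu$, hence $d_3\mid d_4$ — and then assemble $H_1(M_n,\mathbb{Z})=\mathbb{Z}_{d_1}\oplus\mathbb{Z}_{d_2/d_1}\oplus\mathbb{Z}_{d_3/d_2}\oplus\mathbb{Z}_{d_4/d_3}$. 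The main obstacle I anticipate is the $d_3$ computation: there are sixteen $3\times3$ minors, and one must show both that $\mu\,\gcd(kt,\zeta)$ divides each of them and that it is realized as an explicit $\mathbb{Z}$-linear combination of them; the $d_2$ step has the same two-sided character with the thirty-six $2\times2$ minors. The block-triangular reduction together with the single identity are exactly what keep this bookkeeping finite and transparent, but the gcd computations themselves remain the technical heart of the argument.
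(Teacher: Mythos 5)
Your overall plan — compute the four determinantal divisors $d_1,\dots,d_4$ of $B(n)$ from the decomposition in Lemma~\ref{evenformB}, proving each formula by a two-sided gcd argument — is exactly the strategy of the paper's proof, and three of your ingredients are correct and are genuine improvements in exposition: the identity $\mu=\zeta^{2}-(5a-b)\zeta t+a\,k\,t^{2}$ (which the paper never states explicitly), the eigenvalue computation $\det B(n)=\big[(\lambda_1^n-\lambda_2^n)(\lambda_3^n-\lambda_4^n)\big]^2=k\mu^2$ (which the paper merely asserts; your sign bookkeeping and the evaluations $pq=\mu$, $\big[(\lambda_2-\lambda_1)(\lambda_4-\lambda_3)\big]^2=k$ all check out), and the unimodular block-triangular reduction using $J\widehat{X}=-XJ$, which the paper does not have — it works with the full $4\times4$ matrix and its $6\times6$ matrix of $2\times2$ minors throughout.

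However, there is a genuine gap at $d_2$ and $d_3$, and it is not just deferred bookkeeping. Your identity shows that any $\mathbb{Z}$-combination of $\zeta^2,\,\zeta t,\,t^2$ lies in the $\mathbb{Z}$-span of $\{\mu,\,\zeta t,\,t^{2}\}$ — \emph{not} of $\{\mu,\,\zeta t,\,k t^{2}\}$. The claim that, for the specific matrix $B(n)$, the $t^2$-coefficient of every reduced $2\times2$ minor is divisible by $k$ is a nontrivial property of $B(n)$ and does not follow from the identity; the same goes for the claim that every $3\times3$ minor is divisible by $\mu$ (a priori it only lies in the span of $\zeta^3,\zeta^2t,\zeta t^2,t^3$). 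In the reverse direction, to conclude $d_2\mid\gcd(\mu,\zeta t,kt^2)$ and $d_3\mid\mu\gcd(kt,\zeta)$ one must exhibit $\mu,\ \zeta t,\ kt^2$ as $\mathbb{Z}$-combinations of $2\times2$ minors, and $\mu\zeta,\ \mu kt$ as $\mathbb{Z}$-combinations of $3\times3$ minors. These four verifications \emph{are} the paper's proof: Proposition~\ref{evend2} establishes $P=\mu E+t\zeta F+kt^2G$ with integer matrices $E,F,G$ and then realizes $\mu,\,\zeta t,\,kt^2$ inside the module spanned by the minors via explicit (machine-found) identities, and Proposition~\ref{evend3} does the same for $P'=\mu(kt\,C'+\zeta D')$; even Proposition~\ref{evend1} needs the explicit expressions of $\zeta$ and $t$ through the entries of $B$, which your $d_1$ step also omits. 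Since nothing in your setup guarantees in advance that these verifications succeed, the proposal as written establishes neither $\gcd\mid d_k$ nor $d_k\mid\gcd$ for $k=2,3$: it is a correct and well-organized plan whose technical heart — precisely the part the paper actually proves — is missing.
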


\begin{proof}
We note that $$\det(B(n))=(1+4a-4b)(\frac{s^2+(4a-(3a+b)^2)t^2}{4})^2=k\,\mu^2,$$ where
$k=1+4a-4b$  and $\mu=\frac{s^2+(4a-(3a+b)^2)t^2}{4}.$

The number $k=A(-1)$ is well known in the knot theory. It absolute value is called a determinant of the knot  $K$  and was introduced firstly by K. Reidemeister in his book \cite{Reid}.

Since $k=1+4a-4b$ is odd we conclude that $\mu$ is an integer number. Moreover, since both sequences $s=s(n)$ and $t=t(n)$ satisfy the recursive relation (\ref{mainrecurs}) the same holds for $\zeta(n)=\frac{s(n)+(5a-b)t(n)}{2}.$ Taking into account the initial data for $s(n)$ and $t(n)$ we have $\zeta(0)=0,\,\zeta(2)=1,\,\zeta(4)=1-8a,\,\zeta(6)=1-19a+36a^2+12a\,b.$ Hence, $\zeta(n)$ is integer number for all even $n.$
 
As we have expression of the matrix $B=B(n)$ from lemma \ref{evenformB}, one can easily find two integer matrices $C$ and $D$ such that $B=\zeta\,C+t\,D.$ So, each entry $B_{i,j}$ of matrix $B$ is an integer linear combination $\zeta$ and $t.$ The converse  is also true. Namely, $\zeta=-B_{1,1}-B_{2,4}$ and 
$t=(-2-8a+2b+4a\,b)B_{1,1}+B_{1, 2}+2(2-b)B_{1,3}+(5-8a+6b+4a\,b-4b^2)B_{1,4}-2(2-b)B_{2,3}-(3+6b-4b^2)B_{2, 4}.$ This leads to the following result. 

\begin{proposition}\label{evend1}  
The greatest common divisor $d_1$ of all entries of matrix $B$ is equal to the greatest common divisor of $t$ and $\zeta.$
\end{proposition}

In order to find the second element of the Smith Normal Form of $B$ we need to find the greatest common divisor of all $2\times2$ minors of $B.$   
 
\begin{proposition}\label{evend2}  
The greatest common divisor $d_2$ of all $2\times2$ minors of matrix $B$ is equal to   $\gcd(\mu,\,\zeta\,t,\,k\,t^2).$
\end{proposition} 

\begin{proof}
Following notation from Wolfram Mathematica \cite{mathem} denote by $P=\textrm{Minors[B,\,2]}$ the six by six matrix consisting of $2\times2$ minors of $B.$ By $P_{i,j}$ we denote the $(i,\,j)$-entry of matrix $P.$

By making use of lemma \ref{evenformB} one can find three integer matrices $E,\,F$ and $G$ such that $P=\mu\,E+t\,\zeta\,F+k\,t^2\,G.$ This can be done by the following procedure. First, we express each $P_{i,j}$ in the form $P_{i,j}=\mu\,E_{i,j}+R_{i,j},$ where $E_{i,j}$ and $R_{i,j}$ are the polynomial quotient and polynomial remainder of $P_{i,j}$ and $\mu$ with respect to $s.$ Then we set $E=\{E_{i,j}\}_{i,j=1,\ldots,6}$ and note that all the entries of $E$ are integer polynomials in $a$ and $b.$ Next, we present each $R_{i,j}$ as $R_{i,j}=t\,\zeta\,F_{i,j}+k\,t^2\,G_{i,j},$ where $F_{i,j}$ and $k\,t^2\,G_{i,j}$ are the polynomial quotient and polynomial remainder of $R_{i,j}$ and $t\,\zeta$ with respect to $s.$ Again, all the entries of matrices $F=\{F_{i,j}\}_{i,j=1,\ldots,6}$ and $G=\{G_{i,j}\}_{i,j=1,\ldots,6}$ are integer polynomials in $a$ and $b.$  

Set $x=\mu,\,y=t\,\zeta$ and $z=k\,t^2.$ From now on we can consider the elements $P_{i,j}$ as integer linear combinations of $x,\,y$ and $z$ whose coefficients are integer polynomials in $a$ and $b.$ 

Consider $J=Span_{\mathbb{Z}}(P_{i,j},\,i,j=1,\ldots,6)$ as a $\mathbb{Z}$-module. As it shown before, each $P_{i,j}$ is an integer linear combination of $x,\,y$ and $z.$ Hence the same is true for all elements of $J.$ Now we will prove that $J$ is generated by $x,\,y,\,z$ as a $\mathbb{Z}$-module. This is equivalent to the statement of proposition under proof. To do this we need only to show that all three of $x,\,y,\,z$ belong to $J.$ The inclusion of $y$ in $J$ follows from the identity  

\begin{eqnarray*}
y&=&2(-1-6a+2a b)(2P_{1, 3}+P_{1, 6}- P_{3, 3}) + 2(2-b)(P_{6, 5}+P_{6, 6})\\
&+&P_{3, 5} + (4+a-7b+2b^2)(2P_{1, 5}-P_{2, 5})\\
&+&(15+14a-28b-4a b+8b^2) (P_{4, 3}-P_{5, 1}+P_{6, 3}).
\end{eqnarray*}

To prove that $x$ and $z$ lie in $J$ we make some small observation 

\begin{eqnarray*}
&P_{1, 3}&-P_{1, 6} = 2a\,x,\\ 
&P_{4, 3}&-P_{5, 1}+P_{6, 3} = a\,y,\\
&2P_{1, 5}&-P_{2, 5} = -a\,z+2a\,y.
\end{eqnarray*}

Hence, $2a\,x,\,a\,y$ and $a\,z$ lie in $J.$ 

Next, we take the following $4$-tuple of elements of $J:(P_{1, 1} + P_{1, 2}, P_{1, 4}, P_{2, 4}, P_{3, 4}).$ Reducing each element of this tuple modulo polynomials $2a\,x,\,a\,z$ and $y$ we get much more simple expression whose parts belong to $J:$

$$(x, -b^2 z+2b x, b z - 2b^2 z + (-2 + 6b) x, -z + 4b z - 3b^2 z + (-4 + 8b)x).$$ 

So, consequently we conclude that $x\in J,\,b^2 z \in J,\, b\,z\in J$ and lastly, $z\in J.$ 
\end{proof}

\begin{proposition}\label{evend3}  
The greatest common divisor $d_3$ of all $3\times3$ minors of matrix $B$ is equal to $\mu\,\gcd(k\,t, \zeta).$
\end{proposition}

\begin{proof}
As before, following notation from Wolfram Mathematica \cite{mathem} denote by $P^{\prime}=\textrm{Minors[B,\,3]}$ the four by four matrix consisting of $3\times3$ minors of $B.$ By $P_{i,j}^{\prime}$ we denote the $(i,\,j)$ -entry of matrix $P^{\prime}.$

From lemma \ref{evenformB} one can derive the following identity 
$$P^{\prime}=\mu(k\,t\,C^\prime+\zeta\,D^\prime),$$
where 
$$C^\prime=\left(\begin{array}{cccc}
-a + b& a& 0& a\\ -1-a+2b& 0& a& a-b\\ 
-a+b& -a& 0& 1+a-2 b\\ -a& 0& -a& a-b
\end{array}\right)$$ and 
$$D^\prime=\left(\begin{array}{cccc}
1 + 2 a - 4 b& -2 a& -2 a& -2 a\\ 
2 + 4 a - 6 b& 1 - 2 b& -4 a + 2 b& -4 a + 2 b\\
4 a - 2 b& 4 a - 2 b& -1 + 2 b& -2 - 4 a + 6 b\\
2 a& 2 a& 2 a& -1 - 2 a + 4 b
\end{array}\right).$$ So, all $3\times3$ minors of $B$ are linear combinations of the form $\alpha\,k\,t\,\mu+\beta\,\zeta\,\mu.$ Moreover, one can see that $\zeta\,\mu=P_{2,2}^\prime+P_{2,3}^\prime-P_{1, 2}^\prime-P_{1, 3}^\prime$ and $k\,t\,\mu=(-1+4a+2b)P_{1,2}^\prime+(-3+4a+2b)P_{1,3}^\prime-2(-1+2a+b) (P_{2,2}^\prime+P_{2,3}^\prime)-2P_{2, 4}^\prime+P_{3, 4}^\prime.$ Hence, the greatest common divisor of all $3\times3$ minors of $B$ coincides with the greatest common divisor $\gcd(k\,t\,\mu,\zeta\,\mu)=\mu\gcd(k\,t,\zeta).$
\end{proof}

There is only one $4\time4$ minor of $B$ which is $\det(B)=k\,\mu^2.$ Hence, the  greatest common divisor $d_4$ of all $4\times 4$ minors is equal to $k\,\mu^2.$ Now, we use all the above propositions to finish the proof of the theorem.
\end{proof}

The connection between Theorem 1 and Theorem 2 is given by the following refined version of the second Plans theorem (\cite{Plans}, \cite{Steve}).

\begin{theorem} Let $K$ be a knot with Alexander polynomial $A(z)=a+(b-a)z+(1-2b)z^2+(b-a)z^3+a z^4.$ Let $s(n)$ and $t(n)$ be the associated with $A(z)$ recurrent sequences. Suppose that $n$ is even. Consider the two following sequences $\hat\alpha(n)=\gcd(t(n),\frac{s(n)+(5a - b)t(n)}{2})$ and $\hat\beta(n)=\frac{s(n)^2+(4a-(3a+b)^2)t(n)^2}{4\alpha(n)}$ and set $k=1+4a-4b.$ Then $H_{1}(M_{2},\mathbb{Z})=\mathbb{Z}_{k}$ and we have the following exact sequence of Abelian groups
$$0 \longrightarrow \mathbb{A}\oplus\mathbb{A} \longrightarrow H_{1}(M_{n},\mathbb{Z}) \longrightarrow \mathbb{Z}_{k} \longrightarrow 0,$$ where $\mathbb{A}=\mathbb{Z}_{\hat\alpha(n)}\oplus\mathbb{Z}_{\hat\beta(n)}.$
\end{theorem}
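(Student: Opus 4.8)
The plan is to reduce everything to a single kernel computation. I would first dispose of the claim $H_1(M_2,\mathbb{Z})\cong\mathbb{Z}_k$ by specializing Theorem~\ref{HomevenB} to $n=2$. The initial data give $s(2)=2$ and $t(2)=0$, hence $\zeta(2)=\tfrac12\big(s(2)+(5a-b)t(2)\big)=1$ and $\mu(2)=\tfrac14\big(s(2)^2+(4a-(3a+b)^2)t(2)^2\big)=1$. Substituting into the formulas for $d_1,\dots,d_4$ yields $d_1=d_2=d_3=1$ and $d_4=k\,\mu(2)^2=k$, so the Smith normal form of $B(2)$ is $\mathrm{diag}(1,1,1,k)$ and $H_1(M_2,\mathbb{Z})\cong\mathbb{Z}_k$.

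For the exact sequence I would construct the surjection directly from the presentations of Section~2. Since $n$ is even, the relation $x_{i+2}=x_i$ forces $x_{i+n}=x_i$, so every defining relation of $\mathcal H_n=H_1(M_n,\mathbb{Z})$ already holds after imposing $x_{i+2}=x_i$; adjoining these relations exhibits $\mathcal H_2=H_1(M_2,\mathbb{Z})\cong\mathbb{Z}_k$ as the quotient of $\mathcal H_n$ by the subgroup $N=(T^2-1)\mathcal H_n$, where $T\colon x_j\mapsto x_{j+1}$ is the shift. This produces the canonical sequence $0\to N\to H_1(M_n,\mathbb{Z})\to\mathbb{Z}_k\to 0$ at no cost, and comparing orders via Theorem~\ref{HomevenB} gives $|N|=k\mu^2/k=\mu^2=|\mathbb{A}\oplus\mathbb{A}|$. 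The theorem is thereby reduced to the single isomorphism $N\cong\mathbb{A}\oplus\mathbb{A}$.

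The tool for identifying $N$ is the quadratic identity
$$\mu=\zeta^2-(5a-b)\,\zeta\,t+a\,k\,t^2,$$
which follows by writing $s=2\zeta-(5a-b)t$ and using $(5a-b)^2-(3a+b)^2=16a(a-b)$. It has two consequences I would exploit. First, $\gcd(t,\zeta)^2\mid\mu$, so the invariant factors of $\mathbb{A}=\mathbb{Z}_{\hat\alpha}\oplus\mathbb{Z}_{\hat\beta}$ are $\hat\alpha=\gcd(t,\zeta)$ and $\hat\beta=\mu/\hat\alpha$ with $\hat\alpha\mid\hat\beta$, so $\mathbb{A}\oplus\mathbb{A}$ has invariant factors $\hat\alpha,\hat\alpha,\hat\beta,\hat\beta$. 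Second, since $(5a-b)\zeta t$ and $akt^2$ are multiples of $\zeta t$ and $kt^2$, the identity gives $d_2=\gcd(\mu,\zeta t,kt^2)=\gcd(\zeta^2,\zeta t,kt^2)$, which puts the even-$n$ invariant factors of Theorem~\ref{HomevenB} into a shape directly comparable with $\hat\alpha$ and $\hat\beta$. I would then verify $N\cong\mathbb{A}\oplus\mathbb{A}$ one prime at a time: setting $\tau=v_p(t),\ z=v_p(\zeta),\ \kappa=v_p(k)$ and reading $v_p(\mu)$ off the identity, I compare the $p$-elementary divisors of $H_1(M_n,\mathbb{Z})$ with those of $\mathbb{A}\oplus\mathbb{A}$ after quotienting by the cyclic $p$-part $\mathbb{Z}_{p^\kappa}$ of $\mathbb{Z}_k$.

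The hard part is precisely this prime-by-prime matching at the primes $p\mid k$. Because $k=A(-1)$ is odd but may share factors with $\mu$, the sequence $0\to\mathbb{A}\oplus\mathbb{A}\to H_1(M_n,\mathbb{Z})\to\mathbb{Z}_k\to 0$ need not split, and the even invariant factors are not simply $\hat\alpha,\hat\alpha,\hat\beta,k\hat\beta$: in the regime $v_p(t)<v_p(\zeta)$ they interleave in a manner depending on $v_p(\mu)$ and on $v_p(a),\,v_p(5a-b)$. I expect to resolve this either by an exhaustive case analysis on the relative sizes of $\tau,z,\kappa$, showing in each case that the $p$-part of $H_1(M_n,\mathbb{Z})$ is an extension of $\mathbb{Z}_{p^\kappa}$ by $(\mathbb{A}\oplus\mathbb{A})_p$, or, more economically, by first invoking the qualitative form of the second Plans theorem to know a priori that $N\cong\mathbb{B}\oplus\mathbb{B}$ for some group $\mathbb{B}$ with $|\mathbb{B}|=\mu$; it then suffices to compute the first determinantal divisor of a presentation of $N$ and check that it equals $\gcd(t,\zeta)$, which forces $\mathbb{B}\cong\mathbb{A}$ and completes the proof.
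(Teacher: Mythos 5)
Your preliminary steps are sound, and in fact they take a different (and legitimate) route to the easy parts than the paper does: you get $H_1(M_2,\mathbb{Z})\cong\mathbb{Z}_k$ by specializing Theorem~\ref{HomevenB} at $n=2$ (where $s(2)=2$, $t(2)=0$, so $\zeta=\mu=1$), whereas the paper computes $B(2)=2\Gamma-I$ and its Smith form directly; you get the exact sequence $0\to N\to H_1(M_n,\mathbb{Z})\to\mathbb{Z}_k\to 0$ with $N=(T^2-1)H_1(M_n,\mathbb{Z})$ from the shift-invariant presentation of Section~2, whereas the paper derives it from a matrix factorization. Your identity $\mu=\zeta^2-(5a-b)\,\zeta\,t+a\,k\,t^2$ is also correct, since $(5a-b)^2-(3a+b)^2=16a(a-b)$. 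All of this correctly reduces the theorem to the single claim $N\cong\mathbb{A}\oplus\mathbb{A}$.

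The gap is that this claim --- the actual content of the theorem --- is never proved, and neither of your two fallback strategies closes it. Strategy one (matching $p$-elementary divisors): knowing the isomorphism types of $H_1(M_n,\mathbb{Z})$, of $\mathbb{A}\oplus\mathbb{A}$, and of $\mathbb{Z}_k$ does not determine the kernel of a surjection onto $\mathbb{Z}_k$; for instance $\mathbb{Z}_p\oplus\mathbb{Z}_{p^2}$ surjects onto $\mathbb{Z}_p$ with kernel $\mathbb{Z}_{p^2}$ and also with kernel $\mathbb{Z}_p\oplus\mathbb{Z}_p$. So at primes dividing $\gcd(k,\mu)$ you must analyze the specific subgroup $N$, and you explicitly leave that case analysis (the ``interleaving'') undone. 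Strategy two (Plans--Stevens plus a first determinantal divisor): the logic is fine granted its two inputs, but the decisive one --- ``a presentation of $N$'' --- is exactly what you do not have, and producing it is the key idea of the paper's proof. There, Lemma~\ref{evenformB} shows that the integer matrix $W_2=B(2)^{-1}B(n)$ has precisely the centrosymmetric form already treated in Theorem~\ref{HomoddB}, so $\mathrm{coker}\,W_2\cong\mathbb{A}\oplus\mathbb{A}$ by the same row-column reduction; then the six-term sequence $\ker W_1\to \mathrm{coker}\,W_2\to \mathrm{coker}\,W_1W_2\to \mathrm{coker}\,W_1\to 0$ (Huang's theorem, cited in the paper) applied to $B(n)=W_1W_2$ with $W_1=B(2)$ and $\ker W_1=0$ delivers the presentation of the kernel and the exact sequence simultaneously. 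Without this factorization, or some substitute for it, your proposal remains a plan whose central step is unexecuted.
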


\begin{proof}

Let $n$ be even. Direct calculation from formula (\ref{defineB}) leads to $B(2)=\Gamma^2-(\Gamma-I)^2=2\Gamma-I.$ Hence 
$$B(2)=\left(\begin{array}{cccc}
-1 + 2 a & -2 + 2 b& -2 b& -2 a\\ 2 a & -1 + 2 b & -2 b & -2 a\\ 
2 a & 2 b & 1 - 2 b & -2 a\\ 2 a & 2 b & 2 - 2 b & 1 - 2a
\end{array}\right).$$  Its determinant  
is $1+4a-4b.$ Since $a$ and $b$ are whole numbers $B(2)$ is invertible. 

It easy to see that the greatest common divisor of all entries of $B(2)$ is $1,$ The same is true for minors of orders $2$ and $3.$ So, the Smith Normal Form of $B(2)$ is $\textrm{diag}\{1,1,1,1+4a-4b\}$ and $H_{1}(M_{2},\mathbb{Z})=\textrm{coker}(B(2))={Z}_{1+4a-4b}.$ 

By making use of Lemma \ref{evenformB} we have 
$$B(2)^{-1}B(n)=\left(\begin{array}{cccc}
\alpha& \beta & \gamma & \delta\\ 
-\delta& \alpha-\gamma& -\delta& 0\\ 
0 & -\delta & \alpha-\gamma &-\delta\\ 
\delta & \gamma & \beta &\alpha
\end{array}\right),$$ where $\alpha=\frac{s(n)+(a-b)t(n)}{2}, \beta=(-1+a+2b)t(n), \gamma=(a-b)t(n), \delta=-a\,t(n).$ We note for $n$ odd, the matrix of this form appeared early in Theorem \ref{HomoddB}. 

From here we consider two matrices $W_1=B(2)$ and $W_2=B(2)^{-1}B(n)$ as $\mathbb{Z}$-linear operators from $\mathbb{Z}^4$ into itself. Dealing with $W_2$ in the same way as in the proof of Theorem \ref{HomoddB} we deduce that the Abelian group represented by $W_2$ has the form $\mathbb{A}\oplus\mathbb{A},$ where $\mathbb{A}=\mathbb{Z}_{\hat\alpha(n)}\oplus\mathbb{Z}_{\hat\beta(n)}.$ Here $\hat\alpha(n)$ and $\hat\beta(n)$ are the same as in the statement under proof.

Moreover, there exist an exact sequence of the type 
$$ker\, W_1 \longrightarrow coker\,W_2 \longrightarrow coker\, W_1 W_2 \longrightarrow coker\,W_1 \longrightarrow 0.$$ 
For the proof see, for example (\cite{SSK}, Snowflake Lemma 2.3.16)  or (\cite{Huang}, Theorem 2.1). 

From the above we have $\textrm{ker}\,W_1=0$ and $\textrm{coker}\,W_1= \mathbb{Z}_{1+4a-4b}.$ Also, $\textrm{coker}\,W_2=\mathbb{A}\oplus\mathbb{A}$ and $\textrm{coker}\,W_1 W_2=H_{1}(M_{n},\mathbb{Z}).$ Hence, the previous exact sequence morphs into 

$$0 \longrightarrow \mathbb{A}\oplus\mathbb{A} \longrightarrow H_{1}(M_{n},\mathbb{Z}) \longrightarrow \mathbb{Z}_{k} \longrightarrow 0.$$
\end{proof}

\section{Examples}

\subsection{Knot $6_2$}

Alexander polynomial: $A(z)=-1+3z-3z^2+3z^3-z^4.$ Here $a=-1,b=2.$

We set $\alpha(n)=\gcd(t(n),\frac{-7t(n)+s(n)}{2})$ and $\beta(n)=\frac{s(n)^2-5t(n)^2}{4\alpha(n)}.$

If $n$ is odd then $H_1(M_n(6_3),\mathbb{Z})= \mathbb{Z}_{\alpha(n)}^2\oplus\mathbb{Z}_{\beta(n)}^2.$

If $n$ is even and is not a multiple of $12$ then 
$$H_1(M_n(6_3),\mathbb{Z})=\mathbb{Z}_{\alpha(n)}\oplus\mathbb{Z}_{\alpha(n)}\oplus\mathbb{Z}_{\beta(n)}\oplus 
\mathbb{Z}_{11\beta(n)}$$ otherwise $$H_1(M_n(6_3),\mathbb{Z})=\mathbb{Z}_{\alpha(n)}\oplus\mathbb{Z}_{11\alpha(n)}\oplus\mathbb{Z}_{\beta(n)}\oplus\mathbb{Z}_{\beta(n)}.$$

Here is a table for initial values of $\alpha(n)$ and $\beta(n).$

\begin{table}[htp]
\caption{Data for $H_1(M_n(6_2),\mathbb{Z})$}
\begin{center}
\begin{tabular}{|c|c|c|c|c|c|c|c|c|c|c|c|c|}
\hline
$n$&1&2&3&4&5&6&7&8&9&10&11&12 \\
\hline
$\alpha(n)$&1&1&1&1&2&1&1&3&1&4&1&1 \\
\hline
$\beta(n)$&1&1&5&1&2&5&29&3&5&4&131&55 \\
\hline
\end{tabular}
\end{center}
\label{default}
\end{table}%

\subsection{Knot $6_3$}

Alexander polynomial: $A(z)=1-3z+5z^2-3z^3+z^4.$ Here $a=1,b=-2.$

We set $\alpha(n)=\gcd(t(n),\frac{3t(n)+s(n)}{2})$ and $\beta(n)=\frac{s(n)^2-21t(n)^2}{4\alpha(n)}.$

If $n$ is odd then $H_1(M_n(6_3),\mathbb{Z})= \mathbb{Z}_{\alpha(n)}^2\oplus\mathbb{Z}_{\beta(n)}^2.$

If $n$ is even and is not a multiple of $14$ then 
$$H_1(M_n(6_3),\mathbb{Z})=\mathbb{Z}_{\alpha(n)}\oplus\mathbb{Z}_{\alpha(n)}\oplus\mathbb{Z}_{\beta(n)}\oplus 
\mathbb{Z}_{13\beta(n)}$$ otherwise $$H_1(M_n(6_3),\mathbb{Z})=\mathbb{Z}_{\alpha(n)}\oplus\mathbb{Z}_{13\alpha(n)}\oplus\mathbb{Z}_{\beta(n)}\oplus\mathbb{Z}_{\beta(n)}.$$

Here is a table for initial values of $\alpha(n)$ and $\beta(n).$

\begin{table}[htp]
\caption{Data for $H_1(M_n(6_3),\mathbb{Z})$}
\begin{center}
\begin{tabular}{|c|c|c|c|c|c|c|c|c|c|c|c|c|c|c|}
\hline
$n$&1&2&3&4&5&6&7&8&9&10&11&12&13&14 \\
\hline
$\alpha(n)$&1&1&1&1&4&1&1&1&1&8&1&3&1&1 \\
\hline
$\beta(n)$&1&1&7&3&4&7&43&21&133&8&397&63&1171&559 \\
\hline
\end{tabular}
\end{center}
\label{default}
\end{table}%

\subsection{Knot $7_7$}

Alexander polynomial: $A(z)=1-5z+9z^2-5z^3+z^4.$ Here $a=1,b=-4.$

We set $\alpha(n)=\gcd(t(n),\frac{9t(n)+s(n)}{2})$ and $\beta(n)=\frac{s(n)^2+3t(n)^2}{4\alpha(n)}.$

If $n$ is odd then $H_1(M_n(7_7),\mathbb{Z})= \mathbb{Z}_{\alpha(n)}^2\oplus\mathbb{Z}_{\beta(n)}^2.$

If $n$ is even we have the following cases  
\begin{enumerate}
\item $n\equiv0\mod12$
$$H_1(M_n(7_7),\mathbb{Z})=\mathbb{Z}_{\alpha(n)}\oplus\mathbb{Z}_{21\alpha(n)}\oplus\mathbb{Z}_{\beta(n)}\oplus 
\mathbb{Z}_{\beta(n)};$$ 
\item $n\equiv2,10\mod12$
$$H_1(M_n(7_7),\mathbb{Z})=\mathbb{Z}_{\alpha(n)}\oplus\mathbb{Z}_{\alpha(n)}\oplus\mathbb{Z}_{\beta(n)}\oplus 
\mathbb{Z}_{21\beta(n)};$$ 
\item $n\equiv4,8\mod12$
$$H_1(M_n(7_7),\mathbb{Z})=\mathbb{Z}_{\alpha(n)}\oplus\mathbb{Z}_{7\alpha(n)}\oplus\mathbb{Z}_{\beta(n)}\oplus\mathbb{Z}_{3\beta(n)};$$
\item $n\equiv6\mod12$
$$H_1(M_n(7_7),\mathbb{Z})=\mathbb{Z}_{\alpha(n)}\oplus\mathbb{Z}_{3\alpha(n)}\oplus\mathbb{Z}_{\beta(n)}\oplus\mathbb{Z}_{7\beta(n)}.$$
\end{enumerate}

Here is a table for initial values of $\alpha(n)$ and $\beta(n).$

\begin{table}[htp]
\caption{Data for $H_1(M_n(7_7),\mathbb{Z})$}
\begin{center}
\begin{tabular}{|c|c|c|c|c|c|c|c|c|c|c|c|c|}
\hline
       $n$ & 1 & 2 & 3 & 4 & 5 & 6 & 7  & 8  & 9  & 10 & 11  & 12\\
\hline
$\alpha(n)$& 1 & 1 & 1 & 1 & 2 & 1 & 1  & 1  & 1  & 8  & 1   & 5\\
\hline
$\beta(n)$ & 1 & 1 & 13 & 7 & 38 & 39 & 421 & 217 & 2353 & 152 & 13201 & 1365\\
\hline
\end{tabular}
\end{center}
\label{default}
\end{table}%

\section*{Acknowledgment} 
The  author  was supported by Mathematical Center in Akademgorodok under agreement No. 075-15-2019-1613 with the Ministry of Science and Higher Education of the Russian Federation.

\end{document}